\newcommand{\remove}[1]{}
\newtheorem{thm}{Theorem}
\newtheorem{lem}[thm]{Lemma}
\newtheorem{cor}[thm]{Corollary}
\newtheorem{conjecture}{Conjecture}
\newtheorem{remark}{Remark}
\renewcommand{\remove}[1]{}
\newcommand{\poly}{{\rm poly}}
\newcommand{\eps}{{\varepsilon}}
\renewcommand{\l}{\left}
\renewcommand{\r}{\right}
\newcommand{\comments}[1]{}
\def\F{{\mathbb{F}}}
\newcommand{\Z}{\mathbb{Z}}
\newcommand{\R}{\mathbb{R}}
\renewcommand{\F}{\mathbb{F}}
\newcommand{\Fq}{\mathbb{F}_q}
\newcommand{\calP}{\mathcal{P}}
\newcommand{\calL}{\mathcal{L}}
\newcommand{\calM}{\mathcal{M}}
\newcommand{\calN}{\mathcal{N}}
\newcommand{\calQ}{\mathcal{Q}}
\newcommand{\calS}{\mathcal{S}}
\def\draft{0}   
    \def\ShowAuthNotes{1}
    \def\ShowAuthNotes{0}
\newcommand{\authnote}[2]{{ \footnotesize \bf{\color{red}[#1's Note: {\color{blue}#2}]}}}
\newcommand{\authnote}[2]{}
\begin{document}
\title{On primitive elements in finite fields of low characteristic}

\author{
Abhishek Bhowmick\thanks{Email: bhowmick@cs.utexas.edu. Department of Computer Science, The University of Texas at Austin. Research supported in part by NSF Grants CCF-0916160 and CCF-1218723.}
\and
Th\'ai Ho\`ang L\^e\thanks{Email: leth@math.utexas.edu. Department of Mathematics, The University of Texas at Austin.}
}

\maketitle
\thispagestyle{empty}
\begin{abstract}
We discuss the problem of constructing a small subset of a finite field containing primitive elements of the field. Given a finite field, $\F_{q^n}$, small $q$ and large $n$, we show that the set of all low degree polynomials contains the expected number of primitive elements.

The main theorem we prove is a bound for character sums over short intervals in function fields. Our result is unconditional and slightly better than what is known (conditionally under GRH) in the integer case and might be of independent interest.
\end{abstract}

\section{Introduction}
\subsection{Character sum estimates}
Let $q>1$ and $\chi$ be a non-principal character modulo $q$. It is desirable to have an estimate of the form
\[
  \sum_{1 \leq n \leq x} \chi(n) = o(x)
\]
for $x$ as small as possible (depending on $q$). Such an estimate would have immediate applications in finding the first non-quadratic residue modulo $q$, 
or the smallest primitive root modulo $q$ (if, say, $q$ is an odd prime power).

In \cite{mv}, Montgomery and Vaughan had the idea of comparing the above character sum to the corresponding sum over \textit{smooth numbers}. 
Given $x, y>0$, a positive integer $n$ is called $y$-smooth if none of its prime divisors are greater than $y$. Let $\calS(x,y)$ be the set of all integers $1 \leq n \leq x$ that is $y$-smooth. For a function $f$, put
$\Psi(x,y;f) = \sum_{n \in \calS(x,y)} f(n)$. We also put $\Psi(x,y) = \Psi(x,y;1) = |\calS(x,y)|$.

Montgomery and Vaughan \cite{mv} proved the following estimate under the Generalized Riemann Hypothesis (GRH):

\begin{thm}[{\cite[Lemma 2]{mv}}]
 Let $\chi$ be a non-principal character modulo $q$. Then for all $(\log q)^4 \leq y \leq x \leq q$, we have
 \[
  \sum_{1 \leq n \leq x} \chi(n) = \sum_{n \in \calS(x,y)} \chi(n) + O \left(xy^{-1/2} (\log q)^4 \right).
 \]
\end{thm}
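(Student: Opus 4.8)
The plan is to estimate the difference between the two sides --- namely the character sum restricted to the non-$y$-smooth integers --- by grouping each such integer according to its largest prime factor, then interchanging the order of summation so that the innermost sum is a character sum over primes in an interval, where GRH supplies square-root cancellation.

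First I would write the left-hand side minus the smooth term as
\[
  \sum_{1\le n\le x}\chi(n) - \sum_{n\in\calS(x,y)}\chi(n) = \sum_{\substack{n\le x\\ P(n)>y}}\chi(n),
\]
where $P(n)$ denotes the largest prime factor of $n$. Every integer $n$ with $P(n)>y$ factors uniquely as $n=pm$ with $p=P(n)$ a prime $>y$ and $P(m)\le p$, and conversely any such pair $(p,m)$ gives an $n$ with $P(n)=p$; so by multiplicativity of $\chi$,
\[
  \sum_{\substack{n\le x\\ P(n)>y}}\chi(n)=\sum_{y<p\le x}\chi(p)\sum_{\substack{m\le x/p\\ P(m)\le p}}\chi(m).
\]
The key manoeuvre is to reverse the order of summation, summing over $m$ first. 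For fixed $m$ the prime $p$ must satisfy $p>y$, $p\ge P(m)$, and $p\le x/m$, so the inner sum becomes a character sum over primes in an interval:
\[
  \sum_{\substack{n\le x\\ P(n)>y}}\chi(n)=\sum_{m\le x/y}\chi(m)\sum_{\substack{p>y,\ p\ge P(m)\\ p\le x/m}}\chi(p).
\]

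Now I would invoke GRH. Under GRH one has $\psi(t,\chi)=\sum_{n\le t}\Lambda(n)\chi(n)=O(\sqrt t\,(\log qt)^2)$, and hence, after removing prime powers and applying partial summation, $\sum_{a<p\le b}\chi(p)=O(\sqrt b\,(\log qb)^2)$ uniformly in the interval $(a,b]$. Applying this with $b=x/m$, taking absolute values inside the $m$-sum, and using $\sum_{m\le x/y}m^{-1/2}=O(\sqrt{x/y})$ gives
\[
  \Bigl|\sum_{\substack{n\le x\\ P(n)>y}}\chi(n)\Bigr|
  \ll \sqrt{x}\,(\log q)^2\sum_{m\le x/y}m^{-1/2}
  \ll x\,y^{-1/2}(\log q)^2,
\]
which already sits inside the claimed error term, leaving room for the careful bookkeeping below.

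The step I expect to be the main obstacle is securing genuine square-root cancellation in the innermost prime sum $\sum_{a<p\le b}\chi(p)$, since the interval $(\max(y,P(m)),\,x/m]$ can be extremely short (when $x/m$ is close to $y$). This is exactly where GRH is indispensable: one passes through the explicit formula for $\psi(t,\chi)$ and controls the contributions of the nontrivial zeros, all on $\Re s=\tfrac12$, to obtain the clean $\sqrt t$ bound with a fixed power of $\log qt$. The rest is bookkeeping --- accounting for the prime powers $p^k$, treating the constraint $P(m)\le p$ at the endpoint, and tracking the logarithmic losses from partial summation --- which is what inflates the exponent of $\log q$ from $2$ to the safe value $4$; the hypothesis $y\ge(\log q)^4$ then guarantees that the resulting error is genuinely of smaller order than $x$.
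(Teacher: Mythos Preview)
The paper does not prove this statement itself; it is quoted from Montgomery--Vaughan as motivation. What the paper does prove is the function field analogue (Theorem~\ref{th:main}), and it explicitly says it follows Montgomery--Vaughan's method closely, so that is the natural point of comparison.

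Your route and the Montgomery--Vaughan/paper route are genuinely different. The paper's approach is analytic: one factors $L(s,\chi)=\calM(s,\chi)\calN(s,\chi)$ with $\calM$ the partial Euler product over primes $p\le y$ and $\calN=L/\calM$; the difference $\sum_{n\le x}\chi(n)-\sum_{n\in\calS(x,y)}\chi(n)$ is then extracted by Perron's formula (Cauchy's integral formula in the function field case) as a contour integral involving $\calM\,(\calN-1)$, and one bounds $|\calM|$ via Mertens and $|\log\calN|$ by expanding it as a sum over primes $p>y$ and applying the GRH bound on $\sum_{p}\chi(p)p^{-s}$. Your approach, by contrast, is purely combinatorial: peel off the largest prime factor, interchange the order of summation, and apply the GRH bound for $\sum_{a<p\le b}\chi(p)$ directly. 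Both arguments invoke GRH at essentially the same place (square-root cancellation in character sums over primes), but yours bypasses the $L$-function machinery and contour integration entirely. The analytic method is more robust --- it transfers cleanly to function fields and to more general multiplicative coefficients, which is why the paper adopts it --- while your elementary decomposition is shorter for this particular statement and, as you noticed, even appears to save a power of $\log q$ before the final bookkeeping.
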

Coupled with known estimates on $\Psi(x,y)$, any result of this kind will lead to an estimate for $\sum_{n \leq x} \chi(n)$. 

In \cite{gs}, Granville and Soundararajan refined Montgomery-Vaughan's method and improved their result to a wider range of $x$ and $y$. 
They proved the following, still under GRH:

\begin{thm}[{\cite[Theorem 2]{gs}}]
 Let $\chi$ be a non-principal character modulo $q$. Then for all $(\log q)^2 (\log x)^2 (\log \log x)^{12} \leq y \leq x \leq q$, we have
 \[
  \sum_{1 \leq n \leq x} \chi(n) = \sum_{n \in \calS(x,y)} \chi(n) + O \left( \frac{\Psi(x,y)}{(\log \log q)^2} \right).
 \]
\end{thm}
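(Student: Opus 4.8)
The plan is to follow the Montgomery--Vaughan strategy, whose whole point is that the discrepancy between the two sides is \emph{exactly} the contribution of the $y$-rough integers, namely
\[
  \sum_{1 \le n \le x}\chi(n) - \sum_{n\in\calS(x,y)}\chi(n)
   = \sum_{\substack{n\le x\\ P(n)>y}}\chi(n),
\]
where $P(n)$ denotes the largest prime factor of $n$. The right-hand side ought to be small precisely because it is a sum over integers divisible by a large prime, and over primes a non-principal $\chi$ exhibits cancellation. Turning this heuristic into the stated bound is the whole of the argument.

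To expose the prime cancellation I would attach a logarithmic weight. Writing $\log n=\sum_{d\mid n}\Lambda(d)$ and using complete multiplicativity of $\chi$, one obtains, with $\theta(t):=\sum_{n\le t}\chi(n)$,
\[
  \sum_{n\le x}\chi(n)\log n=\sum_{p^k\le x}\chi(p)^k\log p\,\theta(x/p^k),
\]
together with the identical identity for the smooth sum, in which only primes $p\le y$ occur. Abel summation turns the left-hand sides into $\theta(x)\log x-\int_1^x\theta(t)\,dt/t$ and its smooth counterpart. Subtracting, and setting $E(t):=\theta(t)-\Psi(t,y;\chi)$, the prime powers with $p\le y$ reproduce $E$ while the primes $p>y$ survive as a driving term, yielding an integral equation of the shape
\begin{align*}
  E(x)\log x-\int_1^x E(t)\frac{dt}{t}
   &=\sum_{\substack{p\le y\\ p^k\le x}}\chi(p)^k\log p\,E(x/p^k)\\
   &\quad+\sum_{\substack{p>y\\ p^k\le x}}\chi(p)^k\log p\,\theta(x/p^k).
\end{align*}

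The driving term is where GRH enters: by partial summation over $p$, using the conditional estimate $\psi(t,\chi)=\sum_{n\le t}\Lambda(n)\chi(n)\ll\sqrt t\,(\log qt)^2$, one bounds $\sum_{p>y}\chi(p)^k\log p\,\theta(x/p^k)$; the square-root saving against the length of the sum is what ultimately beats the trivial size $\Psi(x,y)$, provided $y$ is at least a suitable power of $\log q$. What remains is to solve the integral equation by a Buchstab-type iteration, at each stage re-inserting the current bound on $E$ through the $p\le y$ sum so that the estimate for the driving term propagates to $E(x)$ itself. Carefully tracking the accumulated logarithmic factors, and comparing against $\Psi(x,y)$ via standard counts for smooth numbers together with the control of $\Psi(x/p^k,y)$ by $\Psi(x,y)$, is what should produce both the denominator $(\log\log q)^2$ and the admissible range $(\log q)^2(\log x)^2(\log\log x)^{12}\le y\le x$.

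\emph{The main obstacle} is this last step. The naive iteration loses a factor of $\log\log q$ in several places, so extracting the full $(\log\log q)^{-2}$ saving over the widest possible window of $y$ requires the more careful bookkeeping of Granville--Soundararajan rather than the original Montgomery--Vaughan treatment. A secondary but unavoidable technical point is the bootstrapping: cleanly bounding the driving term wants an a priori bound on $\theta(t)$ (equivalently on $E$), so the whole argument must be set up as a self-improving inequality for $\max_{t\le x}|E(t)|$ rather than a single-pass estimate.
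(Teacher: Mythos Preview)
The paper does not prove this statement at all. It is quoted verbatim from Granville--Soundararajan \cite{gs} as background and motivation in the introduction, alongside the Montgomery--Vaughan estimate and the Granville--Soundararajan conjecture; the paper's own contribution begins only with the function field analogue, Theorem~\ref{th:main}. So there is no ``paper's own proof'' to compare your proposal against, and you should not be writing one here.

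That said, your sketch is a reasonable outline of the Montgomery--Vaughan/Granville--Soundararajan strategy in the integers, and you correctly identify the main difficulty (the careful bookkeeping needed to get the $(\log\log q)^{-2}$ saving and the wider range). But this is orthogonal to the present paper: the authors work in $\Fq[t]$, replace Perron's formula by Cauchy's integral formula, and exploit the unconditional character sum estimate over irreducibles (Theorem~\ref{th:hsu}) in place of the GRH-conditional $\psi(t,\chi)\ll\sqrt{t}(\log qt)^2$. None of your integer-setting machinery is used or needed here.
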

They also conjectured the following estimate for a wider range.  
\begin{conjecture}
 There exists a constant $A>0$ such that the following holds. Let $\chi$ be a non-principal character modulo $q$. 
 Then for all $(\log q + (\log x)^2 ) (\log \log q)^A \leq y \leq x \leq q$, we have
 \[
  \sum_{1 \leq n \leq x} \chi(n) = \sum_{n \in \calS(x,y)} \chi(n) + o \left( \Psi(x,y, \chi_0) \right).
 \]
 Here $\chi_0$ is the principal character modulo $q$.
\end{conjecture}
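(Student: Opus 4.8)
The natural plan is to start from the Granville--Soundararajan argument of \cite{gs} and push the underlying Montgomery--Vaughan comparison to the limit of what it can yield. Write $F(x)=\sum_{1\le n\le x}\chi(n)$ and $\Psi(x,y;\chi)=\sum_{n\in\calS(x,y)}\chi(n)$. Since $\chi$ is completely multiplicative, factoring every $n\le x$ whose largest prime factor $P(n)$ exceeds $y$ uniquely as $n=pm$ with $p=P(n)$ and $m$ a $p$-smooth number gives the exact identity
\[
 F(x)-\Psi(x,y;\chi)=\sum_{\substack{1\le n\le x\\ P(n)>y}}\chi(n)=\sum_{y<p\le x}\chi(p)\,\Psi\left(\frac{x}{p},\,p;\,\chi\right).
\]
This reduces the conjecture to showing that the right-hand side --- a sum over primes $p\in(y,x]$ weighted by smooth character sums at the smaller scales $x/p$ --- is $o(\Psi(x,y,\chi_0))$.

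To estimate the right-hand side I would use partial summation in $p$ together with the strongest conditional cancellation available for character sums. Under GRH one has $\sum_{n\le t}\chi(n)\ll\sqrt t\,\log q$, which is nontrivial already for $t\gg(\log q)^2$, and via the explicit formula comparable cancellation holds in the prime sums $\sum_{t<p\le t'}\chi(p)$. Inserting these bounds and measuring the outcome against the natural benchmark $\Psi(x,y,\chi_0)$ --- the number of $y$-smooth integers up to $x$ coprime to $q$, whose size is governed by the saddle-point asymptotics for $\Psi(x,y)$ --- the goal is to control the contribution of every scale $x/p$ in the recursion and thereby upgrade the Granville--Soundararajan error $\Psi(x,y)/(\log\log q)^2$ to a genuine $o(\Psi(x,y,\chi_0))$ throughout the wider window $y\ge(\log q+(\log x)^2)(\log\log q)^A$.

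The main obstacle is the regime $y\approx(\log x)^2$ with $x\le q$. There $\Psi(x,y)$ is of size roughly $\sqrt x$, and under GRH $F(x)$ is itself only of size about $\sqrt x$ up to logarithmic factors; the assertion is thus that the entire non-smooth contribution, a sum of $\chi(p)$ over primes all the way up to $x$, is of strictly smaller order than $\sqrt x$. Securing the needed cancellation uniformly --- in particular for primes $p$ near the bottom of the range, of size close to $y\approx\log q$, where no cancellation in an individual sum $\sum_{p\le t}\chi(p)$ is accessible and one must rely entirely on averaging over the smooth cofactors --- is precisely where the argument stalls. The factor $(\log\log q)^{-2}$ in \cite{gs} measures exactly the margin by which the available large-sieve and mean-value inputs fall short of $o(1)$, and closing that gap appears to demand genuinely new cancellation for character sums over primes in very short intervals, beyond what even GRH currently provides; this is why the statement remains conjectural. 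A more tractable intermediate target would be an on-average version, summing over the characters $\chi$ modulo $q$ (or over $q$ itself), where the large sieve supplies the missing cancellation within the same range of $y$.
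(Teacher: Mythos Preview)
The statement you were asked to prove is not a theorem of the paper; it is \emph{Conjecture~1}, attributed to Granville and Soundararajan, and the paper offers no proof of it whatsoever. The authors state it only to situate their own function-field result (Theorem~\ref{th:main}) against the conjectured optimal range in the integer setting, and then move on.

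Your write-up is therefore not being compared against anything: there is no ``paper's own proof'' here. What you have written is not a proof either, and to your credit you say so explicitly --- you sketch the Montgomery--Vaughan/Granville--Soundararajan decomposition, identify the critical regime $y\approx(\log x)^2$ where $\Psi(x,y)$ and the GRH bound for $F(x)$ are both of order roughly $\sqrt{x}$, and then correctly conclude that closing the gap would require cancellation in prime character sums beyond what GRH supplies. That diagnosis is accurate and matches the reason the statement is listed as a conjecture rather than a theorem. But it is a discussion of obstructions, not a proof, and should not be presented as one.
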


In this paper, we will prove an analog of these estimates in function fields. Let $\F_q$ be the field over $q$ elements and $\Fq[t]$ be the polynomial ring over $\Fq$. 
Let $A_d$ denote the set of all polynomials of degree \textit{exactly} $d$. Given $r \in \Z^+$, a polynomial $f \in \Fq[t]$ is called $r$-\textit{smooth} 
if none of its irreducible factors have degree greater than $r$. Let $\calP(d,r)$ denote the set of all $r$-smooth polynomials in $A_d$ and $N(d,r)=|\calP(d,r)|$.

If $Q \in \Fq[t]$, then a character $\chi$ modulo $Q$ is simply a character on the multiplicative group $\left(\Fq[t]/(Q)\right)^{\times}$, 
which can be extended to a function on all of $\Fq[t]$ by periodicity. If $Q$ is irreducible of degree $n$, then $\chi$ can be naturally regarded as a character 
on the field $\F_{q^n}$.

Thoughout this paper, $f$ will stand for a \textit{monic} polynomial and $P$ for a \textit{monic, irreducible} polynomial in $\Fq[t]$.

We will prove the following:

\begin{thm} \label{th:main}
Let $Q$ be a polynomial of degree $n$ in $\Fq[t]$ and $\chi$ be a non-principal character modulo $Q$. 
For any $2 \log_q n \leq r \leq d \leq n$, we have
\[
  \sum_{f \in A_d} \chi(f) = \sum_{f \in \calP(d,r)} \chi(f) + O \left( n q^{-r/2} q^d\right).
\]
\end{thm}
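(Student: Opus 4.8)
The plan is to translate everything into the language of the associated $L$-function and then reduce the theorem to a single square-root-cancellation estimate supplied by Weil's Riemann Hypothesis for curves over finite fields.

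I would first record the relevant generating identity. Put
\[
L(u,\chi)=\sum_{f\text{ monic}}\chi(f)\,u^{\deg f}=\prod_{P}\bigl(1-\chi(P)u^{\deg P}\bigr)^{-1},
\]
which for non-principal $\chi$ is a polynomial in $u$ of degree at most $n-1$, and write $S_m=\sum_{f\in A_m}\chi(f)=[u^m]L(u,\chi)$ for the complete character sum in degree $m$. Splitting the Euler product at $r$ gives $L=L_{\le r}\cdot\prod_{\deg P>r}(1-\chi(P)u^{\deg P})^{-1}$, where $L_{\le r}(u)=\prod_{\deg P\le r}(1-\chi(P)u^{\deg P})^{-1}=\sum_m\Psi_m u^m$ generates the smooth sums $\Psi_m=\sum_{f\in\calP(m,r)}\chi(f)$. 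Introducing the ``rough'' series
\[
G(u)=\prod_{\deg P>r}\bigl(1-\chi(P)u^{\deg P}\bigr)=\sum_{k\ge 0}R_k\,u^k,\qquad R_k=\sum_{\substack{\deg g=k,\ g\text{ squarefree}\\ \text{all factors of degree}>r}}\mu(g)\chi(g),
\]
the factorisation reads $L_{\le r}=L\cdot G$, so that $\Psi_d=\sum_k R_kS_{d-k}$. Since $R_0=1$ and $R_k=0$ for $1\le k\le r$, subtracting from $S_d$ yields the clean finite identity
\[
\sum_{f\in A_d}\chi(f)-\sum_{f\in\calP(d,r)}\chi(f)=-\sum_{k=r+1}^{d}R_k\,S_{d-k}.
\]

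The estimate now rests on two inputs. The trivial one is $|S_{d-k}|\le q^{\,d-k}$, the number of monic polynomials of the relevant degree. The essential one is the bound $|R_k|\ll n\,q^{k/2}$: from $L_{\le r}=L\cdot G$ we have $G=L_{\le r}/L$, and since $L_{\le r}$ is holomorphic and zero-free on $|u|<1$, the only singularities of $G$ in that disc are the zeros of $L(u,\chi)$. By Weil's theorem these zeros number at most $n-1$ and all satisfy $|u|\ge q^{-1/2}$, so $G$ is holomorphic on $|u|<q^{-1/2}$ and its coefficients satisfy $|R_k|\ll n\,q^{k/2}$ with an absolute implied constant. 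Granting this, the identity gives
\[
\Bigl|\sum_{f\in A_d}\chi(f)-\sum_{f\in\calP(d,r)}\chi(f)\Bigr|\le\sum_{k=r+1}^{d}|R_k|\,|S_{d-k}|\ll n\,q^{d}\sum_{k>r}q^{-k/2}\ll n\,q^{-r/2}q^{d},
\]
which is exactly the claimed bound; note that the hypothesis $r\ge 2\log_q n$ is precisely what keeps this error term at most $q^d$.

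The main obstacle is the passage from ``radius of convergence $q^{-1/2}$'' to the clean coefficient bound $|R_k|\ll n\,q^{k/2}$ with the correct linear dependence on $n$ and no spurious factor of $k$ or $\log$. The difficulty is that the numerator $L_{\le r}$, although holomorphic on $|u|<1$, can be large near the circle $|u|=q^{-1/2}$, so a naive residue expansion of $G=L_{\le r}/L$ across that circle must control both the magnitude of $L_{\le r}$ at the zeros of $L$ and the multiplicity and separation of those zeros; combinatorially this is the statement that the polynomials counted by $R_k$ having several irreducible factors of degree $>r$ do not overwhelm the single-factor contribution $\sum_{\deg P=k}\chi(P)$, for which Weil already gives $\ll n q^{k/2}/k$. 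I expect the cleanest way to secure the bound uniformly is either to shift the contour just past $|u|=q^{-1/2}$ and estimate the residues together with the remaining integral, or to run a mean-square (Parseval) argument on a circle of radius slightly below $q^{-1/2}$; in either case Weil's bound on the zeros of $L(u,\chi)$ is the one non-elementary ingredient.
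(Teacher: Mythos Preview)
Your convolution identity $S_d-\Psi_d=-\sum_{k>r}R_kS_{d-k}$ is correct and the architecture is reasonable, but the bound $|R_k|\ll nq^{k/2}$ that everything hinges on is a genuine gap, and neither route you sketch will close it. Pushing the contour for $G=L_{\le r}/L$ out toward $|u|=q^{-1/2}$ forces you to control $|L_{\le r}|$ there; but on a circle of radius $\rho$ just below $q^{-1/2}$ one has $\log|L_{\le r}(\rho e^{i\theta})|$ of order $\sum_{l\le r}\pi_l\rho^l\asymp q^{r/2}/r$, so $L_{\le r}$ is of size $\exp\bigl(c\,q^{r/2}/r\bigr)$, which at $r=2\log_q n$ is $\exp(c\,n/\log n)$. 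No residue calculation or Parseval estimate on that circle will beat this down to a polynomial in $n$. Even combinatorially the target looks too strong: the two--factor contribution to $R_{2r+2}$ is essentially $\bigl(\sum_{P\in I_{r+1}}\chi(P)\bigr)^2$, which on the Weil/Hsu bound alone can be as large as $n^2q^{k/2}/r^2\gg nq^{k/2}$ when $r\approx 2\log_q n$. More structurally, bounding $|R_k|$ and $|S_{d-k}|$ separately and summing throws away all phase information; even with the best pointwise bound available from a circle of radius $R<1/q$, namely $|R_k|\ll\frac{n}{r}(Rq^{1/2})^rR^{-k}$, the sum $\sum_{k>r}|R_k|q^{d-k}$ comes out as $\frac{n(d-r)}{r}q^{-r/2}q^d$, a factor $d/r$ worse than the theorem.

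The paper avoids the obstacle by a different factorisation and by \emph{staying on the small circle}. With $\calM=L_{\le r}$ and $\calN=\calL/\calM=1/G$, it writes
\[
S_d-\Psi_d=\frac{1}{2\pi i}\int_{|u|=R}\calM(u)\bigl(\calN(u)-1\bigr)u^{-d-1}\,du,\qquad R<1/q,
\]
and bounds the integrand pointwise. On $|u|\le 1/q$ Mertens' theorem gives $|\calM(u)|\le\prod_{\deg P\le r}(1-q^{-\deg P})^{-1}=O(r)$, while the Weil input, packaged as the explicit estimate $\bigl|\sum_{P\in I_l}\chi(P)\bigr|\le(n+1)q^{l/2}/l$, yields $|\log\calN(u)|\ll\frac{n}{r}(Rq^{1/2})^r$ and hence $|\calN(u)-1|\ll\frac{n}{r}(Rq^{1/2})^r$ once $r\ge 2\log_q n$. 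Multiplying and letting $R\to 1/q$ gives exactly $O(nq^{-r/2}q^d)$, with the $r$ from Mertens cancelling the $1/r$ from the $\calN-1$ bound. The point you are missing is that the smooth Euler product $L_{\le r}$ is only $O(r)$ on the circle $|u|=1/q$, so one should bound the Cauchy integral there directly rather than push toward $q^{-1/2}$ where $L_{\le r}$ explodes.
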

Notice that the implicit constant in $O(\cdot)$ is independent of $q$. The range of applicability of our estimate is better than that of Granville-Soundararajan, which corresponds to
\[
 2 \log_q n + 2 \log_q d + O(\log_q \log_q d) \leq r \leq d \leq n.
\]
Our error term is also better than that of Montgomery-Vaughan, which corresponds to
\[
 O\left( n^4 q^{-r/2} q^d \right).
\]
However, our range is still far weaker than the conjectured range of Granville-Soundararajan.

Our method follows Montgomery-Vaughan closely. The sources of our improvements are a character sum estimate not available in the integers (Theorem \ref{th:hsu}), 
and the use of Cauchy's integral formula instead of Perron's formula. 

From Theorem \ref{th:main} we deduce the following:
\begin{cor}\label{cor:main} Let $Q$ be a polynomial of degree $n$ in $\Fq[t]$ and $\chi$ be a non-principal character modulo $Q$. 
For any $2 \log_q n  \leq r \leq d \leq n$, we have
\[
  \frac{1}{q^d} \left|\sum_{f \in A_d} \chi(f)\right| \leq \eps_{q,d,r,n},
\]
where $\eps_{q,d,r,n}=\rho \left( \frac{d}{r} \right) q^{O \left( \frac{d\log d}{r^2} \right) } + O \left(n q^{-r/2}\right)$ and $\rho(\cdot)$ is the Dickman function. 
\end{cor}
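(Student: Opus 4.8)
The plan is to combine Theorem~\ref{th:main} with the trivial bound $|\chi(f)| \le 1$ and then reduce the whole statement to a count of smooth polynomials. By Theorem~\ref{th:main} and the triangle inequality,
\[
\left| \sum_{f \in A_d} \chi(f) \right| \;\le\; \left| \sum_{f \in \calP(d,r)} \chi(f) \right| + O\!\left( n q^{-r/2} q^d \right) \;\le\; N(d,r) + O\!\left( n q^{-r/2} q^d \right),
\]
since each of the $N(d,r)$ terms in the smooth sum has modulus at most $1$. Dividing by $q^d$ gives
\[
\frac{1}{q^d}\left| \sum_{f \in A_d} \chi(f) \right| \;\le\; \frac{N(d,r)}{q^d} + O\!\left( n q^{-r/2} \right),
\]
so the term $O(n q^{-r/2})$ in $\eps_{q,d,r,n}$ is already accounted for. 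Everything therefore reduces to the smooth-density estimate
\[
\frac{N(d,r)}{q^d} \;\le\; \rho\!\left( \frac{d}{r} \right) q^{O(d \log d / r^2)}.
\]

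This last inequality is the function field analogue of de Bruijn's estimate $\Psi(x,y) \approx x \rho(u)$ for smooth integers, with $u = d/r$ playing the role of $\log x / \log y$. I would prove it through the generating function
\[
\sum_{d \ge 0} N(d,r)\, z^d \;=\; \prod_{k=1}^{r} (1 - z^k)^{-I_k},
\]
where $I_k$ is the number of monic irreducible polynomials of degree $k$, normalised by $\sum_{\ell \mid k} \ell I_\ell = q^k$. Rankin's trick gives, for every $0 < z < 1$,
\[
N(d,r) \;\le\; z^{-d} \prod_{k=1}^{r} (1 - z^k)^{-I_k},
\]
and I would evaluate the right-hand side at (or near) the saddle point, using $I_k = q^k/k + O(q^{k/2})$. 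Taking logarithms and optimising over $z$, the main part of the exponent reproduces $\log(\rho(u)\, q^d)$, while the remaining discrepancy --- coming from the second-order terms of the saddle-point expansion and from the error in the prime-polynomial count --- is exactly what produces the factor $q^{O(d \log d / r^2)}$. Alternatively, one may simply invoke an off-the-shelf function field smooth-count estimate and check that its error term is absorbed into this factor.

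The delicate point will be establishing the density estimate \emph{uniformly} over the entire range $2 \log_q n \le r \le d$. When $u = d/r$ is moderate, the sharp asymptotic $N(d,r) = q^d \rho(u)(1 + o(1))$ is available and more than suffices; but as $r$ approaches $2\log_q n$ the ratio $u$ can be as large as about $n/\log_q n$, the Dickman asymptotics break down, and one must fall back on the cruder Rankin bound, verifying that the gap from $\rho(u)$ never exceeds $q^{O(d \log d / r^2)}$. In this extreme range the verification is eased by the fact that $\rho(u)\, q^{O(d \log d / r^2)}$ is itself $\gg 1$, so the claimed bound is essentially the trivial $N(d,r) \le q^d$ there; the real content lies in the intermediate regime, where the saddle-point comparison must be carried out with the second-order terms retained. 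Once the uniform density estimate is in hand, the corollary follows immediately from the reduction above.
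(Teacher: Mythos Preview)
Your reduction is exactly the paper's: apply Theorem~\ref{th:main}, bound the smooth sum trivially by $N(d,r)$, and then quote a smooth-polynomial count. But you are overcomplicating the second half. The paper already records Soundararajan's estimate as Theorem~\ref{th:sound},
\[
N(d,r) = q^d \rho\!\left(\frac{d}{r}\right) q^{O(d\log d/r^2)} \qquad \text{for } \log_q(d\log^2 d) \le r \le d,
\]
uniformly in $q$, and since $d \le n$ one has $\log_q(d\log^2 d) \le 2\log_q n$, so the hypothesis $r \ge 2\log_q n$ already places you inside its range. There is thus no need to redo a Rankin/saddle-point analysis, and your ``delicate point'' about uniformity near $r = 2\log_q n$ evaporates: Theorem~\ref{th:sound} covers that edge directly. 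With this in hand the corollary is a one-line consequence, which is precisely how the paper proves it.
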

Like Theorem \ref{th:main}, this inequality is uniform in $q$. The function $\rho$ will be defined later in Section \ref{sec:smooth}, but for now we note that $\rho$ decays extremely rapidly to 0: $\rho(u) = u^{-u (1+o(1))}$ as $u \rightarrow \infty$.

\subsection{Finding primitive roots in $\F_{q^n}$}
The problem of deterministically outputting a primitive element of a finite field is a notoriously hard problem. However, a relaxation of this problem is well studied. The goal is to output a small subset of the field guaranteed to contain primitive elements (even one primitive element). This has applications in coding theory, cryptography and combinatorial designs. For details, see the wonderful survey of Shparlinski (Research problem AP5, \cite{Shparsurvey}). We mention the relevant literature now. 

Let $\F_{q^n}$ be the finite field in consideration. We consider the setting of small $q$ and growing $n$. Shoup \cite{Shoup} and Shparlinski \cite{Shpar} independently gave efficient algorithms to construct a set $M \subset \F_{q^n}$ when $q$ is prime\footnote{Shoup's result on character sums also works for $q$ non prime though the result is stated for prime $q$.} of size $\poly(n,q)$ that contains a primitive element. Shparlinski's set is of size $O(n^{10})$. Shoup uses a stronger sieve and gets a set of smaller size (In fact, as noted in \cite{Shparsurvey}, using the stronger sieve, Shparlinksi also obtains a similar bound). However, Shoup outputs the set of low degree polynomials that are irreducible and guarantees a lower bound on the density of the primitive elements, say $\mu$ (where the density is taken over the set of irreducible polynomials). However, the enumeration step requires outputting all degree $d$ polynomials and therefore the density suffers a loss ($o(\mu)$).

Our result differs from this in the following sense. Firstly, we show that the simple set of all monic polynomials of degree $d \sim C\log_q n$ is guaranteed to contain a primitive element. This statement as it is, is not new because of the above result of Shoup. However, we also show that the density of primitive elements is $\mu$, that is we do not suffer any density loss.  We also give another density argument which basically states for $d$ as above the density of primitive elements in the set of monic degree $d$ polynomials is close to $\phi(q^n-1)/(q^n-1)$ (where $\phi(\cdot)$ is the Euler totient function) when $q^n-1$ does not have too many distinct prime divisors. We state our application in more detail next.

From our character sum estimates, we derive the following.
Let $N=q^n$. 
Note that for an irreducible polynomial $Q$ of degree $n$, $\F_{q^n}\equiv \Fq[x]/(Q(x))$. Let $\omega(N-1)$ be the number of distinct prime factors of $N-1$. Let $\calQ(d)$ be the set of monic polynomials of degree $d$ that are primitive in $\F_{q^n}$. We prove the following:
 
\begin{thm}\label{th:main2}Let $d=\l(2\log_q n+2\log_q(1/\eps)\r)\frac{C \log 1/\eps}{\log \log 1/\eps}$, for some absolute constant $C$. Then, $$\l|\frac{|\calQ(d)|}{q^d}-\frac{\phi(N-1)}{N-1}\r|= 2^{\omega(N-1)}O_q(\eps).$$
\end{thm}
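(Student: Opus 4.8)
The plan is to detect primitivity through multiplicative characters and reduce everything to the character-sum bound of Corollary~\ref{cor:main}. Recall that the multiplicative characters of $\F_{q^n}^{\times}$ form a cyclic group of order $N-1$, so for every $g \in \F_{q^n}^{\times}$ we have the Vinogradov-type indicator
\[
  \mathbf{1}[g \text{ is primitive}] = \frac{\phi(N-1)}{N-1} \sum_{e \mid N-1} \frac{\mu(e)}{\phi(e)} \sum_{\mathrm{ord}(\chi) = e} \chi(g),
\]
where the innermost sum runs over the $\phi(e)$ characters of exact order $e$ (the identity is checked by summing both sides over $g$, where only the $e=1$ term survives and gives $\phi(N-1)$). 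Summing over all monic $f \in A_d$ and using that $\deg f = d < n = \deg Q$ forces $\gcd(f,Q)=1$, so $\chi_0(f)=1$ for the principal character, the term $e=1$ contributes exactly $\frac{\phi(N-1)}{N-1}\,q^d$. This is precisely the claimed main term after dividing by $q^d$.

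Next I would bound the non-principal contribution ($e>1$). For each such $\chi$, Corollary~\ref{cor:main} gives $\frac{1}{q^d}\bigl|\sum_{f \in A_d}\chi(f)\bigr| \le \eps_{q,d,r,n}$ for every admissible $r$, uniformly in $\chi$. Since there are exactly $\phi(e)$ characters of order $e$, the factor $\frac{1}{\phi(e)}$ cancels against the count of characters, and summing over the squarefree divisors $e \mid N-1$ with $e>1$ (of which there are $2^{\omega(N-1)}-1$) yields
\[
  \left| \frac{|\calQ(d)|}{q^d} - \frac{\phi(N-1)}{N-1}\right| \le \frac{\phi(N-1)}{N-1}\bigl(2^{\omega(N-1)}-1\bigr)\,\eps_{q,d,r,n} \le 2^{\omega(N-1)}\,\eps_{q,d,r,n}.
\]
It then remains to choose $r$ so that $\eps_{q,d,r,n} = O_q(\eps)$.

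The key choice is $r = 2\log_q n + 2\log_q(1/\eps)$. This is admissible, since $r \ge 2\log_q n$ and, for $n$ large relative to $\eps$, one has $r \le d \le n$. It also kills both pieces of $\eps_{q,d,r,n}$. The second piece is immediate: $n\,q^{-r/2} = n\cdot n^{-1}\eps = \eps$. For the first piece, this $r$ gives $d/r = C\log(1/\eps)/\log\log(1/\eps)$, so that the asymptotic $\rho(u)=u^{-u(1+o(1))}$ yields $\rho(d/r)=\eps^{C(1+o(1))}$. The remaining factor $q^{O(d\log d/r^2)}$ must be controlled uniformly in $n$: writing $d\log d/r^2 = (u\log u)/r + (u\log r)/r$ with $u=d/r$ and using $r \ge 2\log_q(1/\eps)$ together with the monotonicity of $(\log r)/r$, both summands are $O(\log q)$, so $q^{O(d\log d/r^2)} = O_q(1)$. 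Hence $\rho(d/r)\,q^{O(d\log d/r^2)} = O_q\bigl(\eps^{C(1+o(1))}\bigr)$, which is $O_q(\eps)$ once $C$ is taken to be a large enough absolute constant, and the theorem follows.

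The main obstacle is precisely this last parameter balancing: one must verify that the factor $q^{O(d\log d/r^2)}$ arising from the smooth-number main term in Corollary~\ref{cor:main} does not grow with $n$ but stays $O_q(1)$ across the whole admissible range of $r$, so that the rapid decay of the Dickman function $\rho$ alone governs the final estimate. Everything else --- the indicator identity, the extraction of the main term from the principal character, and the $2^{\omega(N-1)}$ squarefree-divisor count --- is routine once Corollary~\ref{cor:main} is available.
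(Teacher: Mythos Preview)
Your proposal is correct and follows essentially the same route as the paper: detect primitivity via multiplicative characters, isolate the principal-character contribution as the main term, bound each non-principal sum by Corollary~\ref{cor:main}, and then choose $r = 2\log_q n + 2\log_q(1/\eps)$ to make $\eps_{q,d,r,n} = O_q(\eps)$. The only cosmetic differences are that the paper uses the equivalent indicator $\sum_{m\mid N-1}\frac{\mu(m)}{m}\sum_{\chi^m=\chi_0}\chi(x)$ and, in the parameter balancing, absorbs the factor $q^{O(d\log d/r^2)}$ into the exponent together with $\log\rho(d/r)$ rather than bounding it separately by $O_q(1)$ as you do; both arguments yield the same conclusion.
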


Another way of looking at this theorem is to see that we have a probabilistic algorithm to output a primitive element that improves on the naive algorithm of outputting a random element in the following ways and gives similar success probability. 
\begin{itemize}
\item Uses nearly logarithmic (in $n$) randomness as opposed to linear.
\item Runs in logarithmic time as opposed to linear.
\end{itemize}

Note that if $N-1$ is prime, then $\omega(N-1)=1$. However, if $\omega(N-1)$ is large, then our bound becomes trivial due to the exponential dependence on $\omega(N-1)$. That is, we require $\eps\ll2^{-\omega(N-1)}$. In such cases, we prove a stronger lower bound on the density of primitive elements using Iwaniec's shifted sieve (as used in \cite{Shoup}). As one can observe, we need a much weaker dependence on $\eps$ now, that is $\eps\ll\omega(N-1)^{-2}$. However, we no longer have a pseudorandomness statement as we have a one sided guarantee which is needed in most applications.
We state it next.
\begin{thm}\label{thm:iwimprove}There is a universal constant $c$ such that the following is true. Let $\eps=\eps_{q,d,r,n}$ from Corollary~\ref{cor:main}. Let $N=q^n$. Then, if $\eps<\frac{c}{\omega(N-1)^2 (\log \omega(N-1) + 1)^2}$, then $$\frac{|\calQ(d)|}{q^d} \geq \frac{c}{(\log \omega(N-1) + 1)^2}.$$ 
\end{thm}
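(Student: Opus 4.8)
The plan is to recast $|\calQ(d)|$ as a sifting function and apply a lower-bound combinatorial sieve. Write $X=q^d$ and let $p_1,\dots,p_\omega$ be the distinct prime divisors of $N-1$, with $\omega=\omega(N-1)$; here $Q$ is irreducible of degree $n$, so $\F_{q^n}^\times\cong(\Fq[t]/(Q))^\times$ is cyclic of order $N-1$ and its characters are exactly the characters modulo $Q$. A monic $f$ of degree $d<n$ is coprime to $Q$, hence primitive precisely when it is a $p_i$-th power residue for no $i$. For squarefree $e\mid N-1$, let $A_e$ be the set of monic degree-$d$ polynomials that are $e$-th power residues; since being a $p$-th power for all $p\mid e$ is the same as being an $e$-th power, the indicator of $A_e$ is $\frac1e\sum_{\chi^e=\chi_0}\chi(f)$, and isolating the principal character gives
\[
  |A_e| = \frac{X}{e} + R_e, \qquad |R_e| \le \frac{e-1}{e}\,\eps X \le \eps X,
\]
where the remainder bound is exactly Corollary~\ref{cor:main} applied to the at most $e-1$ non-principal characters of order dividing $e$. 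Thus $|\calQ(d)| = S(\mathcal{A},\mathcal{P})$ is the sieve of the sequence $\mathcal{A}=A_d$ by the primes $\mathcal{P}=\{p_1,\dots,p_\omega\}$, with multiplicative local density $g(p)=1/p$ and remainders controlled by $\eps$.

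This is structurally the sieve underlying Iwaniec's bound on Jacobsthal's function, namely sifting by the prime factors of a single integer $N-1$. I would invoke the corresponding lower-bound (``shifted'') sieve, as used in \cite{Shoup}: it is a linear ($\kappa=1$) sieve adapted to $\omega$ sifting primes of arbitrary size, and when the remainders are controlled up to level $D\asymp\omega^2$ it delivers
\[
  S(\mathcal{A},\mathcal{P}) \ge c'\,\frac{X}{(\log\omega+1)^2} - \sum_{\substack{e\le D\\ e\mid N-1}} |R_e|.
\]
The main term is within a factor $\log\omega$ of the true primitive density $\phi(N-1)/(N-1)\gg 1/(\log\omega+1)$, the logarithmic loss being the price for using only a polynomial-size level of distribution. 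For the remainder, there are at most $D\le\omega^2$ squarefree $e\le D$, so
\[
  \sum_{\substack{e\le D\\ e\mid N-1}} |R_e| \le \eps X\cdot\omega^2,
\]
and the hypothesis $\eps < c/(\omega^2(\log\omega+1)^2)$ makes this at most $cX/(\log\omega+1)^2$; taking $c$ small relative to $c'$ keeps it below half the main term. Combining the two displays yields $|\calQ(d)|/q^d\ge c/(\log\omega+1)^2$, and the uniformity in $q$ of Corollary~\ref{cor:main} carries through.

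The crux of the argument, and the reason the weak inclusion--exclusion bound of Theorem~\ref{th:main2} (which carries the prohibitive $2^{\omega}$ factor) must be abandoned, is that the sieve is forced to operate at its borderline. In the extremal case where $N-1$ is close to a primorial, all $\omega$ sifting primes lie below $\sim\omega\log\omega$, so sifting them with a level of distribution capped at $D\asymp\omega^2$ puts the sieve parameter $s=\log D/\log z$ right at the value $2$, where the standard Rosser--Iwaniec lower bound degenerates to zero; the shifted sieve is precisely what extracts a positive proportion in this regime. The main technical obstacle I anticipate is therefore not the setup but the faithful verification of the sieve's hypotheses in the function-field setting: one must confirm that the character-sum estimate of Corollary~\ref{cor:main} supplies exactly the level of distribution $D\asymp\omega^2$ the shifted sieve demands, and track all implied constants so that the final density bound depends on $\omega$ alone and the $(\log\omega+1)^2$ in both the hypothesis on $\eps$ and the conclusion line up.
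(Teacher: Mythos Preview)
Your proposal is correct and takes essentially the same approach as the paper: both apply Shoup's version of Iwaniec's shifted sieve to $A_d$ sifted by the prime divisors of $N-1$, using Corollary~\ref{cor:main} to bound the remainders $|S_m - q^d/m| \le \eps\, q^d$ uniformly over squarefree $m \mid N-1$. The paper simply invokes Shoup's black box (uniform remainder bound $B$, error term $c_2 l^2 B$) rather than unpacking it via a level of distribution $D \asymp \omega^2$ as you do, but the resulting $\omega^2 \eps\, q^d$ error and the conclusion are identical.
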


\section{Preliminaries}
\subsection{$L$-functions in $\Fq[t]$}
Recall that $A_k$ is the set of all monic polynomials of degree exactly $k$ in $\Fq[t]$.  
Let $I_k$ be the subset of $A_k$ consisting of irreducible polynomials and $\pi_k=|I_k|$. It is well known that
\begin{equation} \label{eq:pi1}
 \pi_k \leq \frac{q^k}{k}.
\end{equation}

Following Montgomery-Vaughan, we will work with $L$-functions in $\Fq[t]$. Fix $Q$ of degree $n$ and a non-principal character $\chi$ modulo $Q$. Define
\begin{equation}
 L(s, \chi)=\sum_{f \textup{ monic}} \frac{\chi(f)}{|f|^s}
\end{equation}
for $|s|>1$. Put $A(d, \chi)=\sum_{f \in A_d} \chi(f)$. Then we can write
\[
L(s, \chi)=\sum_{m=0}^{\infty} A(m,\chi)q^{-ms}.
\]
Actually, it is even more convenient to put
\[
 \calL(z, \chi)= \sum_{m=0}^{\infty} A(m,\chi) z^{m}.
\]
Clearly $L(s,\chi)=\calL(q^{-s},\chi)$ for any $\textup{Re}(s)>1$. Since $A(m,\chi)=0$ whenever $m \geq n$ \cite[Proposition 4.3]{rosen}, $\calL(z, \chi)$ is a polynomial of degree at most $n-1$, and in particular an entire function. 
We have the Euler product formula
\begin{equation} \label{eq:euler2}
 \calL(z,\chi)=\prod_{P} \left( 1 - \chi(P) z^{\deg(P)} \right)^{-1}
\end{equation}
whenever $|z|<1/q$. 

The \textit{Generalized Riemann Hypothesis in function fields}, proved by Weil, states that all roots of $\calL$ have modulus equal to either 1 or $q^{-1/2}$.
Thus we have yet another representation
\begin{equation} \label{eq:riemann}
 \calL(z,\chi)=\prod_{i=1}^m (1 - \alpha_i z)
\end{equation}
where $1 \leq m \leq n-1$ and $|\alpha_i|=1$ or $q^{1/2}$ for any $i=1,\ldots, m$.

We recall the following results in $\Fq[t]$ which we will need.

\begin{lem}[Mertens' estimate \cite{rosen2}] \label{mertens} For any $k>0$, we have
\[
\prod_{\deg P \leq k} \left(1-q^{-\deg P} \right)^{-1}=e^\gamma k (1+o_k(1))
\] 
where $\gamma$ is Euler's constant.
\end{lem}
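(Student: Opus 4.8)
The plan is to prove this function-field Mertens estimate by tying the partial Euler product at $z=1/q$ to the zeta function $Z(z)=\sum_{f \text{ monic}} z^{\deg f}=(1-qz)^{-1}$ of $\Fq[t]$, whose Euler product $\prod_P (1-z^{\deg P})^{-1}=\prod_{j\geq 1}(1-z^j)^{-\pi_j}$ converges for $|z|<1/q$ (see \cite{rosen}). First I would take logarithms of the quantity in question and group the irreducibles by degree, writing
\[
 \log \prod_{\deg P \leq k} \left(1-q^{-\deg P}\right)^{-1} = -\sum_{j=1}^k \pi_j \log\!\left(1-q^{-j}\right) = H_k + \sum_{j=1}^k \left[ -\pi_j \log\!\left(1-q^{-j}\right) - \tfrac{1}{j} \right],
\]
where $H_k=\sum_{j\leq k} 1/j = \log k + \gamma + o(1)$ is the harmonic number, which already supplies the $e^\gamma k$ factor. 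Using the prime polynomial theorem $\pi_j=\frac1j\sum_{d\mid j}\mu(j/d)q^d = q^j/j + O(q^{j/2}/j)$ (so that $\pi_j q^{-j}-\tfrac1j = O(q^{-j/2}/j)$ and the higher terms of $-\log(1-q^{-j})$ contribute $O(q^{-j}/j)$), each bracketed term is $O(q^{-j/2}/j)$. Hence the bracketed sum converges as $k\to\infty$, giving $\log k + \gamma + C + o_k(1)$ with $C=\sum_{j\geq 1}\left[-\pi_j\log(1-q^{-j}) - 1/j\right]$.

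The crux is to show that this constant $C$ vanishes, even though it appears to depend on $q$. I would introduce, for $0\leq z \leq 1/q$, the function
\[
 G(z)=\sum_{j\geq 1}\left[ -\pi_j \log\!\left(1-z^{j}\right) - \frac{(qz)^{j}}{j}\right],
\]
so that $C=G(1/q)$. For $z<1/q$ the Euler product gives $\sum_{j\geq 1}\bigl(-\pi_j\log(1-z^j)\bigr)=\log Z(z)=-\log(1-qz)$, while the geometric series gives $\sum_{j\geq 1}(qz)^j/j=-\log(1-qz)$ as well; the two cancel, so $G(z)\equiv 0$ on $[0,1/q)$. The same estimate $O(q^{-j/2}/j)$ bounds the $j$-th summand uniformly on the closed interval $[0,1/q]$, so by the Weierstrass $M$-test $G$ is continuous there, and therefore $C=G(1/q)=\lim_{z\to 1/q^-}G(z)=0$.

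Combining the two steps yields $\log \prod_{\deg P \leq k}(1-q^{-\deg P})^{-1} = \log k + \gamma + o_k(1)$, hence the product is $e^\gamma k\,(1+o_k(1))$, as claimed. The only real obstacle is the cancellation of the $q$-dependent constant $C$: a naive term-by-term expansion leaves behind $q$-dependent contributions from both the secondary terms of $-\log(1-q^{-j})$ and the error $\pi_j-q^j/j$, and it is not obvious a priori that these sum to zero. The resolution is precisely the exact shape of the zeta function $Z(z)=(1-qz)^{-1}$ together with comparing its two expansions (Euler product versus logarithmic series), which forces the cancellation for all $q$; everything else is routine tail estimation justified by $\pi_j=q^j/j+O(q^{j/2}/j)$.
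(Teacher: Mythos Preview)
The paper does not prove this lemma; it simply quotes it from Rosen~\cite{rosen2} as a known result, so there is no ``paper's own proof'' to compare against. Your argument is correct and gives a self-contained proof: the decomposition into $H_k$ plus the convergent tail $\sum_j\bigl[-\pi_j\log(1-q^{-j})-1/j\bigr]$ is sound, the termwise bound $O(q^{-j/2}/j)$ follows from the exact formula $j\pi_j=\sum_{d\mid j}\mu(j/d)q^d$, and the key identification $C=0$ via the continuity of $G(z)$ and the identity $\log Z(z)=-\log(1-qz)$ for $|z|<1/q$ is a clean way to dispose of the constant. The only point worth tightening in writing is the uniform bound on the $j$-th summand of $G(z)$ over $z\in[0,1/q]$: you should display explicitly that $(\pi_j-q^j/j)z^j=O(q^{-j/2}/j)$ and $\pi_j\cdot O(z^{2j})=O(q^{-j}/j)$ uniformly in that range, but this is exactly the computation you sketched and it goes through.
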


The next result is a character sum estimate which is not available in the integers. 

\begin{thm}[{\cite[Theorem 2.1]{hsu}}] \label{th:hsu}
 For any polynomial $Q$ of degree $n$ in $\Fq[t]$, any non-trivial character $\chi$ modulo $Q$ and $k>0$, we have
 \[
  \left| \sum_{P \in I_k} \chi(P) \right| \leq (n+1) \frac{q^{k/2}}{k}.
 \]
\end{thm}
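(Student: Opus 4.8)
The plan is to extract $\sum_{P \in I_k} \chi(P)$ as the leading term in the coefficient of $z^k$ in the logarithmic derivative of $\calL(z,\chi)$, and then to control that coefficient through the two representations of $\calL$ at our disposal: the Euler product \eqref{eq:euler2} and the Riemann-hypothesis factorization \eqref{eq:riemann}. Concretely, I would form $z\,\calL'(z,\chi)/\calL(z,\chi)$ and expand it as a power series $\sum_{k \ge 1} c_k z^k$ for $|z| < 1/q$. Differentiating the logarithm of the Euler product gives
\[
 z \frac{\calL'(z,\chi)}{\calL(z,\chi)} = \sum_{P} \sum_{j \geq 1} \deg(P)\, \chi(P)^j z^{j \deg P},
\]
so that, collecting the terms with $j \deg P = k$,
\[
 c_k = \sum_{d \mid k} d \sum_{P \in I_d} \chi(P)^{k/d}.
\]
The term $d = k$ contributes exactly $k \sum_{P \in I_k} \chi(P)$, the quantity we want, while every remaining divisor satisfies $d \le k/2$.

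Next I would bound $c_k$ itself via \eqref{eq:riemann}. Differentiating the logarithm of $\prod_{i=1}^m (1 - \alpha_i z)$ yields $c_k = -\sum_{i=1}^m \alpha_i^k$, so by the triangle inequality together with $m \le n-1$ and $|\alpha_i| \le q^{1/2}$ we get
\[
 |c_k| \le \sum_{i=1}^m |\alpha_i|^k \le (n-1)\, q^{k/2}.
\]

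Finally I would isolate the main term and estimate the lower-order divisor terms trivially. Writing
\[
 k \sum_{P \in I_k} \chi(P) = c_k - \sum_{\substack{d \mid k \\ d < k}} d \sum_{P \in I_d} \chi(P)^{k/d},
\]
and using $|\chi(P)| \le 1$ along with $\pi_d \le q^d/d$ from \eqref{eq:pi1}, each inner sum is at most $q^d/d$ in modulus, so the error is bounded by $\sum_{d \mid k,\, d < k} q^d \le \sum_{1 \le d \le k/2} q^d \le 2\, q^{k/2}$, the last step being a geometric sum valid for every $q \ge 2$. Combining the two estimates gives $k\,\bigl|\sum_{P \in I_k}\chi(P)\bigr| \le (n-1)q^{k/2} + 2q^{k/2} = (n+1)q^{k/2}$, which is the claim after dividing by $k$.

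The one delicate point is entirely in the last step: one must check that the geometric sum of the lower-order divisor contributions is at most $2q^{k/2}$ \emph{uniformly in $q$}, since this is exactly what makes the constant come out to be $n+1$ rather than something larger. This forces the use of the sharp trivial bound $\pi_d \le q^d/d$ instead of any cruder estimate. Everything else — the two series expansions and the appeal to Weil's Riemann hypothesis already encoded in \eqref{eq:riemann} — is routine.
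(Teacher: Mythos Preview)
Your proposal is correct and matches the paper's proof essentially line for line: both compute the coefficient $c_k$ of $z^k$ in $z\,\calL'/\calL$ via the Euler product (giving $\sum_{d\mid k} d\sum_{P\in I_d}\chi(P)^{k/d}$) and via the Weil factorization (giving $|c_k|\le (n-1)q^{k/2}$), then isolate the $d=k$ term and bound the remaining divisors trivially using $\pi_d\le q^d/d$. The only cosmetic difference is that the paper splits off the possible divisor $d=k/2$ separately before summing the geometric tail, whereas you bound the whole tail by $2q^{k/2}$ in one stroke; both routes land on the constant $n+1$.
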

Hsu \cite{hsu} attributed this result to Effinger and Hayes \cite{eh}. When $\chi$ is quadratic, a more general result was proved by Car \cite[Proposition II.2]{car}. 
For completeness, we include a quick proof of Theorem \ref{th:hsu}.
\begin{proof}
 From (\ref{eq:euler2}) and (\ref{eq:riemann}), by taking logarithmic derivatives, we have
 \[
  \sum_{f \in \Fq[t]} \Lambda(f) \chi(f) z^{\deg f} = \sum_{l=1}^\infty \left( - \sum_{i=1}^m \alpha_i^{l} \right) z^{l}
 \]
for $|z|<1/q$, where $\Lambda(f)$ is the von Mangoldt function
\[
\Lambda(f)=
\left\{
  \begin{array}{ll}
    \deg(P), & \hbox{if $P = P^k$ for some monic, irreducible $P$;} \\
    0, & \hbox{otherwise.}
  \end{array}
\right.
\]
By comparing the coefficients of $z^k$, and using the fact that $|\alpha_i| \leq q^{1/2}$ for each $i$, we have
\[
 \left| \sum_{f \in A_k} \Lambda(f) \chi(f) \right| \leq (n-1)q^{k/2}.
\]
Therefore,
\[
 \left| \sum_{l|k} l \sum_{P \in I_l} \chi(P^{k/l}) \right| \leq (n-1)q^{k/2}.
\]
Consequently, in view of (\ref{eq:pi1}) 
\begin{eqnarray*}
 \left| k \sum_{P \in I_k}  \chi(P) \right| &\leq& (n-1)q^{k/2} + \sum_{l|k, l<k} l \pi_l \\
 &\leq & (n-1)q^{k/2} + \sum_{l|k,l<k} q^l \\
 &\leq & n q^{k/2} + \sum_{l=1}^{[k/2]-1} q^l \\
 &\leq & \left(n+1\right)q^{k/2} \\
\end{eqnarray*}
as desired.
\end{proof}

\subsection{Smooth polynomials} \label{sec:smooth}
We recall some facts about the function $N(r,d)$. Just as its integer counterpart $\Psi(x,y)$, $N(d,r)$ 
is closely related to the \textit{Dickman function}. Recall that the Dickman function $\rho(u)$ is the unique continuous function satisfying 
\[
\rho(u) = \frac{1}{u} \int_{u-1}^u \rho(t) dt
\]
for all $u>1$, with initial condition $\rho(u)=1$ for $0 \leq u \leq 1$. 
We have $\rho(u)=1-\log u$ for $1 \leq u \leq 2$, and the following asymptotic formula \cite[Corollary2.3]{HT}
\[
\rho(u) = \exp \left( -u \left( \log u + \log \log (u+2) - 1 + O \left( \frac{\log \log (u+2)}{\log (u+2)} \right) \right) \right)
\]
for all $u \geq 1$. In particular, we have
\begin{equation} \label{eq:rho}
 \rho(u) \leq \exp \left( -u \log u \right)
\end{equation}
for $u$ sufficiently large.

We have the following estimate for $N(d,r)$ due to Soundararajan.
\begin{thm}[{\cite[Theorem 1.1]{sound}}]\label{th:sound}
For any $\log_q{d \log^2 d} \leq r \leq d$, we have, uniformly in $q$,
\[
N(d,r) = q^d \rho \left(\frac{d}{r}\right) q^{ O \left( \frac{d \log d}{r^2} \right)}.
\]
\end{thm}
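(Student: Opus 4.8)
The plan is to extract $N(d,r)$ as a Taylor coefficient of its generating function and to evaluate the resulting contour integral by the saddle point method, matching the main term to the integral characterization of the Dickman function $\rho$. First I would record the generating identity that comes from unique factorization into monic irreducibles of degree at most $r$,
\[
 G(z) := \sum_{d \ge 0} N(d,r)\, z^d = \prod_{k=1}^r \left(1-z^k\right)^{-\pi_k},
\]
which converges for $|z|<1$. By Cauchy's formula, for any $0<\tau<1$,
\[
 N(d,r) = \frac{1}{2\pi i}\oint_{|z|=\tau} \frac{G(z)}{z^{d+1}}\,dz = \frac{\tau^{-d}}{2\pi}\int_{-\pi}^{\pi} G(\tau e^{i\phi})\, e^{-id\phi}\,d\phi .
\]
I would then fix $\tau=\tau(d,r)$ to be the saddle point, i.e.\ the solution of $\tau G'(\tau)/G(\tau)=\sum_{k\le r}\pi_k\, k\tau^k/(1-\tau^k)=d$, so that the integrand is stationary at $\phi=0$ and a Gaussian (Laplace) approximation of the $\phi$-integral applies.

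The next step is to replace $\pi_k$ by its main term. Using the prime polynomial theorem $\pi_k = q^k/k + O(q^{k/2}/k)$ and writing $\tau=q^{-\sigma}$, the saddle equation becomes, to leading order, $\sum_{k\le r} q^{k(1-\sigma)}/(1-q^{-k\sigma}) = d$, which locates $\sigma$ and shows it is governed by the same profile as the integer parameter $\alpha(x,y)$ with $x\leftrightarrow q^d$, $y\leftrightarrow q^r$, $u=d/r$. Substituting $q^k/k$ turns $\log G(\tau)-d\log\tau$ into a Riemann sum approximating the integral that appears in the Laplace transform of $\rho$, with the Mertens estimate (Lemma~\ref{mertens}) supplying the $e^{\gamma}$ constant that matches the normalization of $\rho$. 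Concretely, the target is the main-term estimate
\[
 \log\!\left(G(\tau)\,\tau^{-d}\right) = d\log q + \log\rho\!\left(\tfrac{d}{r}\right) + O\!\left(\frac{d\log d}{r^2}\right),
\]
after which the Gaussian width factor $1/\sqrt{2\pi\,\sigma_2}$ (with $\sigma_2 \asymp \sum_{k\le r}\pi_k\, k^2\tau^k/(1-\tau^k)^2$) is only of polynomial size in $d$ and is comfortably absorbed into the generous error $q^{O(d\log d/r^2)}$.

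Finally I would bound the contribution of the minor arcs $|\phi|$ bounded away from $0$, showing that the central arc captures the entire main term while the rest is negligible; this is exactly where the hypothesis $r \ge \log_q(d\log^2 d)$, equivalently $q^r \gtrsim d\log^2 d$, is used, since it guarantees enough irreducibles of degree near $r$ for the saddle point to be well defined and for $G(\tau e^{i\phi})$ to decay off the central arc. The main obstacle, where essentially all the work lies, is the middle step: producing the clean Dickman main term \emph{together with} the uniform-in-$q$ error $q^{O(d\log d/r^2)}$ requires carefully quantifying both how the finite sum over $k\le r$ approximates the continuous integral defining $\log\rho$ and how the secondary term $O(q^{k/2}/k)$ of the prime polynomial theorem accumulates. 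As a sanity check on the main term, one can verify the heuristic directly from the functional equation $d\,N(d,r)=\sum_{k\le r} k\pi_k\sum_{j\ge1} N(d-jk,r)$: dividing by $q^d$, using $k\pi_k\approx q^k$, and keeping the dominant $j=1$ terms reduces it to $u\rho(u)=\int_{u-1}^{u}\rho(t)\,dt$ with $u=d/r$, which is precisely the defining delay equation of the Dickman function and confirms that $\rho(d/r)$ is the correct main term.
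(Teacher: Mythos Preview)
The paper does not prove this theorem at all: it is quoted verbatim from Soundararajan's unpublished manuscript \cite{sound} and used as a black box in the derivation of Corollary~\ref{cor:main}. There is therefore nothing in the present paper to compare your proposal against.

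That said, your outline is the standard saddle-point strategy for smooth-number and smooth-polynomial counts (in the spirit of Hildebrand--Tenenbaum for the integers), and it is almost certainly the route taken in \cite{sound}: write $N(d,r)$ as a Cauchy coefficient of the Euler product $\prod_{k\le r}(1-z^k)^{-\pi_k}$, choose the radius at the saddle of $\log G(\tau)-d\log\tau$, and identify the main term with the Laplace-type integral representation of $\rho$. Your diagnosis of where the work lies---controlling the approximation of the discrete sum by the continuous integral and the accumulated error from $\pi_k=q^k/k+O(q^{k/2}/k)$, uniformly in $q$, so as to land exactly on the error $q^{O(d\log d/r^2)}$---is accurate, and the lower bound $r\ge\log_q(d\log^2 d)$ is indeed what makes the saddle well-behaved. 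As a sketch this is correct and appropriately targeted; a full proof would require carrying out those estimates with explicit constants, which is substantial but routine for this method.
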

In particular, we have $N(d,r) \sim q^d \rho(d/r)$ as $d \rightarrow \infty$ and $\frac{r}{\sqrt{d \log d}} \rightarrow \infty$.

\section{Proof of Theorem \ref{th:main} and Corollary \ref{cor:main}}
Let us introduce auxiliary functions
\[
\calM(z,\chi)= \prod_{\deg(P) \leq r} \left( 1-\chi(P)z^{\deg(P)} \right) ^{-1}
\] 
and
\[
\calN(z,\chi)=\calL(z,\chi) / \calM(z,\chi).
\] 

Note that for $|z| <1/q$, we have
\[
\calM(z,\chi)= 1 + \sum_{k=1}^{\infty} \left( \sum_{f \in \calP(k,r)} \chi(f) \right) z^{k}. 
\]
Also, by the Euler product formula we have 
\[
\calN(z, \chi)= \prod_{\deg(P)>r} \left( 1-\chi(P) z^{\deg(P)} \right)^{-1}.  
\]
Let $0< R <1/q$ be arbitrary, and $C_R$ be the circle centered at 0 with radius $R$. By the Cauchy integral formula, we have
\begin{eqnarray}
 A(d,\chi) - \sum_{f \in \calP(d,r)} \chi(f) &=& \frac{1}{2 \pi i} \int_{C_R} \left( \calL(z,\chi) - \calM(z,\chi) \right) z^{-d-1} dz \nonumber \\
 &=& \frac{1}{2 \pi i} \int_{C_R} \calM(z,\chi) \left( \calN(z,\chi) - 1 \right) z^{-d-1} dz \label{eq:diff}
\end{eqnarray}
Therefore, it suffices to bound $\calM(z,\chi)$ and $\calN(z,\chi)-1$ on $C_R$.

By Lemma \ref{mertens}, we have $\calM(z,\chi) = O(r)$ on $C_R$.

On $C_R$, we have
\begin{eqnarray}
\left| \log N(z,\chi) \right| &=& \left| \sum_{\deg(P)>r} - \log \left( 1-\chi(P) z^{\deg(P)} \right) \right| \nonumber \\
&=& \left| \sum_{\deg(P)>r} \sum_{m=1}^{\infty} \frac{1}{m} \chi(P^m) z^{m\deg(P)} \right| \nonumber \\
&=& \left| \sum_{m=1}^{\infty} \sum_{k=r+1}^{\infty} \frac{z^{mk}}{m} \sum_{P \in I_k} \chi(P^m) \right|. \label{eq:2parts}  
\end{eqnarray}

We break the above sum into two parts, $m=1$ and $m\geq 2$. By Theorem \ref{th:hsu}, the contribution of $m=1$ in (\ref{eq:2parts}) is
\begin{eqnarray}
\left| \sum_{k=r+1}^{\infty} z^{k} \sum_{P \in I_k} \chi(P) \right| &\ll& n \sum_{k=r+1}^{\infty} \frac{q^{k/2}R^k}{k} \nonumber \\
 &\ll& \frac{n}{r} (Rq^{1/2})^r \label{eq:part1}
\end{eqnarray}
since $Rq^{1/2} < q^{-1/2} \leq 2^{-1/2}$.

The contribution of $m \geq 2$ in (\ref{eq:2parts}) is
\begin{eqnarray}
\left| \sum_{m=2}^{\infty} \sum_{k=r+1}^{\infty} \frac{z^{mk}}{m} \sum_{P \in I_k} \chi(P^m) \right| 
&\leq & \sum_{k=r+1}^{\infty} \sum_{m=2}^{\infty} \frac{1}{m q^{mk}} \cdot \frac{q^k}{k}   \nonumber \\
&\leq & \sum_{k=r+1}^{\infty} \sum_{m=2}^{\infty} \frac{1}{q^{(m-1)k}}  \nonumber \\
& \ll & \sum_{k=r+1}^{\infty} \frac{1}{q^k} \nonumber \\
& \ll & \frac{1}{q^r} \label{eq:part2}.
\end{eqnarray}

From (\ref{eq:part1}) and (\ref{eq:part2}), we have
\begin{equation}
\left| \log N(z,\chi) \right| \ll \frac{n}{r} (Rq^{1/2})^r+1/q^r \ll \frac{n}{r} (Rq^{1/2})^r.
\end{equation}

We have $\frac{n}{r} (Rq^{1/2})^r \leq \frac{n}{r}q^{-r/2} \leq 1$ if $r \geq 2 \log_q n$. Therefore, as long as $r \geq 2 \log_q n$, we have $|\calN(z,\chi)-1|=O\left(\frac{n}{r} (Rq^{1/2})^r\right)$ on $C_R$.
Combining this with (\ref{eq:diff}), we have
\begin{eqnarray} 
 A(d,\chi) - \sum_{f \in \calP(d,r)} \chi(f) &=& O \left( r \cdot \frac{n}{r} (Rq^{1/2})^r \cdot R^{-d} \right) \nonumber \\
 &=& O (n (R q^{1/2})^r R^{-d}) \label{eq:est}.
\end{eqnarray}
Now notice that all the above estimates are independent of $R<1/q$. Hence, letting $R$ tend to $1/q$, we obtain the bound
\[
 A(d,\chi) - \sum_{f \in \calP(d,r)} \chi(f) = O \left( nq^{-r/2} q^d \right)
\]
as desired.

\begin{remark}
 The estimate (\ref{eq:est}) remains valid in the wider range $r \geq 2 \log_q n - O(\log_q \log_q n)$, and Theorem \ref{th:main} could have been extended to this range.
 Unfortunately, when $r \leq 2 \log_q n$, the error term $O \left( nq^{-r/2} q^d \right)$ becomes larger than trivial. 
 The exponent of $q$ in the error term, as well as the coefficient of $\log_q n$ in Theorem \ref{th:main} are the best that can be achieved using our method, since it comes ultimately from Theorem \ref{th:hsu}.
\end{remark}
Corollary \ref{cor:main} follows immediately from Theorems \ref{th:main} and \ref{th:sound}, since clearly $2 \log_q n \geq \log_q (d \log^2 d)$.

\section{Proof of Theorem \ref{th:main2}}
Theorem~\ref{th:main2} is proved in a straightforward manner from Corollary \ref{cor:main}. 
The proof technique is credited to Burgess \cite{Bur62} and has been used in the context of irreducible polynomials in \cite{hsu}.

Recall that $N=q^n$. Define $A(N-1)=\left\{m:m|N-1, m\ \text{squarefree}\right\}$, so that $|A(N-1)|=2^{\omega(N-1)}$. 
Let $\calQ \subseteq A_d$ be the set of primitive elements of $\F$. 

Set $r=2\log_qn+2\log_q 1/\eps$ and $d=r\frac{C\log 1/\eps}{\log \log 1/\eps}$ for some $C$ to be specified later. 

First, we observe that the error term $\eps_{q,d,r,n}$ provided by Corollary \ref{cor:main} is $O_q(\eps)$. 

Indeed, $nq^{-r/2} = \eps$. By (\ref{eq:rho}), there is a constant $C_1$ such that for all $u \geq C_1$, $\rho(u) \leq e^{-u \log u}$. 
Let $C_2$ be a constant such that the first error term in Corollary \ref{cor:main} is bounded by $q^{C_2 \frac{d \log d}{r^2}}$.

We may assume that $\epsilon$ is sufficiently small, so that 
\[
 C_1 \leq \frac{C\log 1/\eps}{\log \log 1/\eps}.
\] 
Then, 
\[
\rho \left( \frac{d}{r} \right) q^{C_2 \frac{d\log d}{r^2}} \leq \exp \left( - \frac{d}{r} \log \frac{d}{r} + C_2 \frac{d\log d \log q}{r^2} \right).
\]
We claim that the left hand side is $\leq \epsilon$, which is true if 
\begin{equation}\label{eq:c1}
\frac{d}{r} \log \frac{d}{r} \geq 2\log \frac{1}{\eps} 
\end{equation} 
and 
\begin{equation}\label{eq:c2}
C_2 \frac{d\log d \log q}{r^2} \leq \frac{d}{2r} \log \frac{d}{r}.
\end{equation} 

Clearly, (\ref{eq:c1}) holds for some absolute constant $C>0$. Also, (\ref{eq:c2}) holds for $\eps$ sufficiently small (depending on $C_2$). Thus, 
\begin{equation}\label{eq:eps} \eps_{q,d,r,n}=O_q(\eps)\end{equation}
Let $\chi_0$ be the principal character on $\F_{q^n}$. For any $m|N-1$, we have $\sum_{\chi:\chi^m= \chi_0}\chi(x)=m$ if $x$ is an $m$-th power residue and zero otherwise. Using this, we build our indicator function for primitive elements in $\F_{q^n}$. 
Let $f(x)$ be the indicator function for primitive roots in $\F_{q^n}$. Then, it is easy to check that $$f(x)=\sum_{m|N-1}\frac{\mu(m)}{m}\sum_{\chi:\chi^m=1}\chi(x)$$ 
Now,
\begin{eqnarray*}
\sum_{x \in A_d}f(x)&=&\sum_{x \in A_d}\sum_{m|N-1}\frac{\mu(m)}{m}\sum_{\chi:\chi^m=\chi_0}\chi(x)\\
&=&\sum_{m|N-1}\frac{\mu(m)}{m}\sum_{\chi:\chi^m=\chi_0} \sum_{x \in A_d} \chi(x)\\
&=&\sum_{m|N-1}\frac{\mu(m)}{m}\left(q^d\right)+\sum_{m|N-1}\frac{\mu(m)}{m}\sum_{\chi \neq \chi_0:\chi^m=1}\sum_{x \in A_d}\chi(x)\\
& = &q^d\frac{\phi(N-1)}{N-1} + O \left( 2^{\omega(N-1)} \eps_{q,d,r,n} q^d \right)\ \ \text{(by Corollary \ref{cor:main})}
\end{eqnarray*}
Hence, by (\ref{eq:eps}), $$\l|\frac{|\calQ(d)|}{q^d}-\frac{\phi(N-1)}{N-1}\r| = 2^{\omega(N-1)}O_q(\eps).$$





\section{Proof of Theorem \ref{thm:iwimprove}}
In this section, we shall see how to improve the probability of outputting a primitive element from the set of monic degree $d$ polynomials. We shall use a generalization of Iwaniec's shifted sieve \cite{iw}, proved by Shoup \cite{Shoup}. We state it next.

\begin{thm}[Prop. 1 in \cite{Shoup}]\label{thm:iw}Let $\Gamma$ be a finite set. 
Let $U:\Gamma \rightarrow \Z$, $W:\Gamma \rightarrow \R_{\geq 0}$. 
Let $p_1,\ldots p_{l}$ be distinct primes with $\Pi=\prod_{i=1}^{l}p_i$. 
Define $$T=\sum_{\gamma \in \Gamma:\  \gcd(U(\gamma),\Pi)=1}W(\gamma)$$ and for $m|\Pi$, 
$$S_m=\sum_{\gamma \in \Gamma:\  U(\gamma)=0 \pmod m}W(\gamma).$$ 
Suppose there are $A,B$ such that for all $m|\Pi$, $$\l|S_m-A/m\r| \leq B.$$ Then 
$$T \geq c_1A/(\log l+1)^2-c_2l^2B,$$ where $c_1, c_2$ are absolute positive constants.
\end{thm}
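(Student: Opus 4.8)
The plan is to prove the bound as a \emph{lower-bound sieve} estimate. I would write the target quantity as $T=\sum_{\gamma\in\Gamma}W(\gamma)\,\mathbf 1[\gcd(U(\gamma),\Pi)=1]$ and seek real weights $\{\lambda_m\}_{m\mid\Pi}$ that minorize the coprimality indicator, in the sense that $\sum_{m\mid n}\lambda_m\le \mathbf 1[n=1]$ for \emph{every} divisor $n\mid\Pi$. Since $W\ge 0$, applying this with $n=\gcd(U(\gamma),\Pi)$, multiplying by $W(\gamma)$ and summing over $\gamma$ (grouping $\gamma$ by the value of $U(\gamma)\bmod m$) gives at once $T\ge\sum_{m\mid\Pi}\lambda_m S_m$. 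I would then substitute the hypothesis $S_m=A/m+r_m$ with $|r_m|\le B$, obtaining
\[
 T\ \ge\ A\sum_{m\mid\Pi}\frac{\lambda_m}{m}\ -\ B\sum_{m\mid\Pi}|\lambda_m|.
\]
Thus the whole problem reduces to exhibiting weights with (i) a \emph{main term} $\sum_{m\mid\Pi}\lambda_m/m\ge c_1(\log l+1)^{-2}$ and (ii) a \emph{bounded $\ell^1$-norm} $\sum_{m\mid\Pi}|\lambda_m|\le c_2 l^2$; these two inequalities are exactly what convert the display into the claimed bound.

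The second step is to construct such weights, which is Iwaniec's device. The naive choice $\lambda_m=\mu(m)$ gives $\sum_{m\mid n}\lambda_m=\mathbf 1[n=1]$ with equality and has main term $\prod_{i=1}^l(1-1/p_i)$, which by Mertens' theorem is of order $1/\log p_l\gg(\log l)^{-1}$; but its support is all $2^l$ squarefree divisors of $\Pi$, so $\sum|\lambda_m|=2^l$ is hopelessly large for (ii). The remedy is to pass to a \emph{truncated} combinatorial sieve whose support is confined to products of few primes (equivalently, to divisors below a level $D$), keeping only $O(l^2)$ terms each of size $O(1)$, so that (ii) holds. To prevent (i) from collapsing, I would first partition the primes $p_1,\dots,p_l$ into $O(\log l)$ size-classes and perform the truncation within classes, so that the divergent small-prime contribution $\sum_i 1/p_i=\log\log l+O(1)$ is tamed and the main term is degraded only from $(\log l)^{-1}$ to $(\log l)^{-2}$. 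Verifying (i) for the resulting weights is then a computation with the Mertens estimates $\sum_{p\le z}1/p=\log\log z+O(1)$ and $\prod_{p\le z}(1-1/p)\asymp 1/\log z$.

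I expect the main obstacle to be verifying the pointwise sieve inequality $\sum_{m\mid n}\lambda_m\le\mathbf 1[n=1]$ for \emph{all} $n\mid\Pi$ simultaneously with the quantitative bounds (i)--(ii). The tension is intrinsic: the singleton constraints (taking $n=p_i$) force each prime to contribute a loss of order $1/p_i$, and $\sum_i 1/p_i$ diverges for small primes, so no plain Bonferroni or inclusion--exclusion truncation with $\ell^1$-norm polynomial in $l$ can retain a positive main term. It is precisely Iwaniec's grouping-and-pairing construction that resolves this, and the bulk of the work is checking that the constructed weights define a genuine minorant. I would carry out this verification by induction on $\omega(n)$, equivalently via a telescoping identity across the size-classes, reducing the global inequality to a bounded number of elementary per-class inequalities. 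Once the minorant property and the two quantitative estimates are in hand, the theorem follows from the displayed inequality, with $c_1,c_2$ the absolute constants produced by the construction.
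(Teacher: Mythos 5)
The paper itself does not prove this statement: it is quoted verbatim as Proposition~1 of \cite{Shoup}, whose proof is a generalization of Iwaniec's shifted sieve \cite{iw}, so your attempt has to be measured against that argument. Your first step is correct, rigorous, and is indeed the skeleton of the Iwaniec--Shoup approach: since $W\ge 0$, any system of weights $(\lambda_m)_{m\mid \Pi}$ satisfying the minorant condition $\sum_{m\mid n}\lambda_m\le \mathbf{1}[n=1]$ for all $n\mid \Pi$ immediately gives $T\ \ge\ A\sum_{m\mid\Pi}\lambda_m/m\ -\ B\sum_{m\mid\Pi}|\lambda_m|$, and your requirements (i) main term at least $c_1(\log l+1)^{-2}$ and (ii) $\ell^1$-norm at most $c_2 l^2$ are exactly what the conclusion needs. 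That reduction is airtight.

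The genuine gap is that the weights are never constructed, and the entire mathematical content of the theorem lives in that construction. Everything specific to the statement --- why a minorant exists whose main term degrades only from $\prod_i(1-1/p_i)\gg (\log l)^{-1}$ (worst case) to $(\log l+1)^{-2}$ while its $\ell^1$-norm is only $O(l^2)$ rather than the $2^l$ of full inclusion--exclusion or the superpolynomial cost of Brun-type truncations --- is delegated to ``Iwaniec's grouping-and-pairing construction,'' i.e., to the very result being proved; your own text concedes that ``the bulk of the work'' is the unverified minorant property. The sketch offered (partition into $O(\log l)$ size-classes, truncate within classes, verify by induction on $\omega(n)$ or telescoping) does not determine a proof: the natural ways of filling it in fail --- multiplying per-class truncated minorants is unsound because two negative factors make the product positive where the indicator vanishes, a single global quadratic minorant such as $(1-s)(1-s/l)$ with $s=\sum_i\mathbf{1}[p_i\mid n]$ has main term roughly $1-\sum_i 1/p_i$, which is negative once the reciprocal sum exceeds $1$, and within-class Bonferroni reintroduces too many cross-class terms --- so Iwaniec's actual device is indispensable and absent. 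A secondary slip: you invoke $\sum_{p\le z}1/p=\log\log z+O(1)$ and Mertens' product as if $p_1,\dots,p_l$ were the first $l$ primes, but the theorem must be uniform over \emph{arbitrary} distinct primes; one needs the (standard, but missing from your sketch) observation that primes exceeding, say, $l^2$ contribute at most $1/l$ to $\sum_i 1/p_i$ and can be absorbed into the linear term of the sieve, so that only the worst case of small primes requires the full construction. As it stands, your proposal is a correct framing plus a citation of the hard step, not a proof.
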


%

\begin{proof}[Proof of Theorem~\ref{thm:iwimprove}]
Let $g$ be an arbitrary primitive element of $\F_{q^n}^\times$. Let $\Gamma=A_d$ and for every $f \in A_d$, let $U(f)$ denote the discrete logarithm of $f$ with base $g$. Let $W(f)=1$. 
Let $N-1=\prod_{i=1}^{l} p_i^{\alpha_i}$ be the prime factorization of $N-1$. Set $\Pi=\prod_{i=1}^{l}p_i$. Let $m|\Pi$ and $\chi$ be a multiplicative character of $\F_{q^n}^\times$ of order $m$. We have
\begin{eqnarray*}
S_m&=& \sum_{f \in A_d:  U(f)=0 \pmod m}1\\
&=&\sum_{f \in A_d} \frac{1}{s} \sum_{i=0}^{m-1}\chi^i(f)\\
&=&\frac{1}{m} \sum_{i=0}^{m-1} \sum_{f \in A_d} \chi^i(f) \\
\end{eqnarray*}
By Corollary~\ref{cor:main}, we have $|S_m - \frac{q^d}{m} | \leq q^d \eps$, where $\eps=\eps_{q,d,r,n}$.
Thus, with $A=q^d$ and $B=q^d\eps$, by Theorem~\ref{thm:iw}, we have $$T \geq c_1q^d/(\log l+1)^2-c_2l^2q^d\eps.$$
Also, note that by the way $T$ is defined, $T$ is also the cardinality of the primitive elements in $A_d$. Thus, we have $$\frac{T}{q^d} \geq c/(\log l+1)^2$$ as long as $\eps<\frac{c}{l^2 (\log l + 1)^2}$,
for a suitably chosen $c>0$.
\end{proof}

\section{Acknowledgement}
We thank Ariel Gabizon, Felipe Voloch and David Zuckerman for helpful discussions, and Kannan Soundararajan for kindly sending us the unpublished manuscript \cite{sound}. The first author would also like to thank his advisor, David Zuckerman for his constant support and encouragement throughout the project.

\end{document}